\title{An attack on Zarankiewicz's problem through SAT solving}
\author{Jeremy Tan\\National University of Singapore}
\newtheorem{theorem}{Theorem}[section]
\newtheorem{corollary}[theorem]{Corollary}
\newtheorem*{argA}{Argument A}
\newtheorem*{argB}{Argument B}
\newtheorem*{argD}{Argument D}
\newtheorem*{argI}{Argument I}
\theoremstyle{definition}
\newtheorem{definition}{Definition}
\begin{document}
	
	\maketitle
	
	\begin{abstract}
		The Zarankiewicz function gives, for a chosen matrix and minor size, the maximum number of ones in a binary matrix not containing an all-one minor. Tables of this function for small arguments have been compiled, but errors are known in them. We both correct the errors and extend these tables in the case of square minors by expressing the problem of finding the value at a specific point as a series of Boolean satisfiability problems, exploiting permutation symmetries for a significant reduction in the work needed.
		
		Certain results related to the graph packing formulation of the problem are used which give exact values at the edges of the function tables. Values in published tables lying deeper in the interior are \textit{not} used, providing independent verification of the correct values in published tables which where almost entirely computed by hand. When the ambient matrix is also square we also give all non-isomorphic examples of matrices attaining the maximum, up to the aforementioned symmetries; it is found that most maximal matrices have some form of symmetry.
	\end{abstract}
	
	\section{Introduction}
	
	The Erdős--Stone theorem \cite{erdosstone} gives an asymptotically tight upper bound for the size of a $H$-free graph of a given order, where $H$ is an arbitrary \textit{non-bipartite} graph. Little is known in the case of bipartite $H$, and to that end Zarankiewicz \cite{origin} posed the following problem in 1951 (translated from the original French):
	
	\begin{quotation}
		Let $R_n$ where $n>3$ be an $n\times n$ square lattice. Find the smallest natural number $k_2(n)$ for which every subset of $R_n$ of size $k_2(n)$ contains 4 points that are all the intersections of 2 rows and 2 columns. More generally, find the smallest natural number $k_j(n)$ for which every subset of $R_n$ of size $k_j(n)$ contains $j^2$ points that are all the intersections of $j$ rows and $j$ columns.
	\end{quotation}
	
	In 1969 Guy \cite{guy} compiled tables of the natural generalisation of $k_j(n)$ where the ambient lattice and the selected sublattice need not be square, but the sublattice cannot be transposed. There is at least one error in his (hand-computed) tables, however, as discovered by Héger \cite{heger}. Merely computing values of $k_j(n)$ also does not provide a complete list of all sublattice-free maximal point sets, which may themselves have many symmetries as the Turán graphs do in their role as extremal $K_n$-free graphs and which may give insights as to the size and structure of further maximal examples.
	
	This paper gives the results of a Boolean satisfiability (SAT)-based approach to Zarankiewicz's problem, motivated by its recent successes in solving very hard combinatorial problems like the fifth Schur number \cite{schur5} and Keller's conjecture in seven dimensions \cite{keller}. Even though much less computational effort was spent here -- all SAT solving was done on a single laptop computer -- already for modestly sized cases the solution is not as trivial as a straight conversion to conjunctive normal form (CNF). The results presented here nevertheless represent a significant extension, both in the range of known values for the non-square generalisation of $k_j(n)$ and (in selected cases) a listing of all maximal examples.
	
	\subsection{Definitions and scope}
	
	\begin{definition}
		The Zarankiewicz function $z_{a,b}(m,n)$ is the maximum number of ones in an $m\times n$ $(0,1)$-matrix with no all-one $a\times b$ minor (such matrices are called \textit{admissible}). Indices are omitted when $a=b$ and when $m=n$, and a matrix achieving the maximum number of ones for a given set of parameters is called \textit{maximal}.
	\end{definition}
	
	Every $(0,1)$-matrix can be interpreted as the biadjacency matrix of a bipartite graph, so $z_a(n)$ is also the maximum size of a $K_{a,a}$-free bipartite graph whose bipartitions have $n$ vertices each. Certain expressions are made simpler with $z_a(n)$ instead of Zarankiewicz's \cite{origin} and Guy's \cite{guy} $k_a(n)=z_a(n)+1$, so the $z$-function will be used in the sequel.
	
	In this paper only the cases $a=b=2,3,4$ will be considered, while maximal matrices will always be discussed up to isomorphism of the equivalent bipartite graphs. The full set of maximal matrices will only be computed when in addition $m=n$; only the value of $z$ is of interest otherwise, with \textit{one} maximal matrix serving as a lower bound complemented by an upper bound proof that adding another one always leads to an all-one $a\times b$ minor.
	
	\section{Exact values, bounds and arguments}\label{sec:eba}
	
	A handful of arguments are listed in Guy \cite{guy} as useful in finding specific values of $z_{a,b}(m,n)$. The three most relevant to this paper are:
	\begin{argA}
		Any admissible matrix with column sums $c_i$, $1\le i\le n$, must satisfy $\sum_i\binom{c_i}a\le(b-1)\binom ma$. Otherwise, by the pigeonhole principle -- where pigeons are $a$-subsets of ones in each column, each such subset potentially part of an all-one $a\times b$ minor, and holes are all $a$-subsets of the matrix's rows -- there is a hole with at least $b$ pigeons, forming an all-one $a\times b$ minor.
	\end{argA}
	\begin{argB}
		For non-negative integers $m,n,k$ with $m-n>1$ and $k\ge2$, $\binom{m-1}k+\binom{n+1}k<\binom mk+\binom nk$. Hence the binomial sum over columns $\sum_i\binom{c_i}a$ in argument A is minimised by distributing ones so that no two column sums differ by more than 1; if the binomial sum is then equal to $(b-1)\binom ma$ and the matrix can still be made to have no all-one $a\times b$ minor, that matrix must be maximal.
	\end{argB}
	\begin{corollary}[Čulik \cite{culik}]
		If $1\le a\le m$ and $n\ge(b-1)\binom ma$, $z_{a,b}(m,n)=(a-1)n+(b-1)\binom ma$.
	\end{corollary}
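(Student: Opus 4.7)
The plan is to prove the matching upper and lower bounds separately, both anchored on Argument A's constraint $\sum_i\binom{c_i}{a}\le(b-1)\binom{m}{a}$, where $c_i$ is the $i$th column sum.

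For the upper bound I would reduce the claim to a pointwise inequality. The goal is to derive $\sum_i c_i\le(a-1)n+(b-1)\binom{m}{a}$ from the Argument A bound, and this follows if one can show $c-(a-1)\le\binom{c}{a}$ for every integer $c\ge0$. This inequality is trivial when $c\le a-1$ (the left side is non-positive while the right side is non-negative), attains equality at $c=a$, and for $c\ge a$ follows by induction since each unit increase in $c$ raises the left side by $1$ and the right side by $\binom{c}{a-1}\ge1$. Summing the inequality over the $n$ columns gives $\sum_i c_i-(a-1)n\le\sum_i\binom{c_i}{a}\le(b-1)\binom{m}{a}$. Note the hypothesis $n\ge(b-1)\binom{m}{a}$ is not needed here.

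For the lower bound I would give an explicit admissible construction achieving the stated value, which is precisely where the hypothesis on $n$ gets used. Enumerate the $\binom{m}{a}$ subsets $S\subseteq\{1,\dots,m\}$ of size $a$ and, for each $S$, designate $b-1$ distinct columns in which the ones occur exactly at the rows of $S$; since $(b-1)\binom{m}{a}\le n$ there are enough columns. In each of the remaining $n-(b-1)\binom{m}{a}$ columns, place $a-1$ ones in a fixed set of $a-1$ rows. Counting yields $a(b-1)\binom{m}{a}+(a-1)(n-(b-1)\binom{m}{a})=(a-1)n+(b-1)\binom{m}{a}$ ones. Admissibility is straightforward: a column with only $a-1$ ones is too sparse to be part of any all-one $a\times b$ minor, while a designated column with ones at $S$ can only contribute to a minor whose row-set is exactly $S$, and only $b-1$ columns have that property by construction.

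The only genuine obstacle is the pointwise inequality $c-(a-1)\le\binom{c}{a}$, which is elementary; everything else is bookkeeping. Alternatively one could argue the upper bound through Argument B and convexity, verifying that any column-sum distribution with total exceeding $(a-1)n+(b-1)\binom{m}{a}$ forces $\sum_i\binom{c_i}{a}>(b-1)\binom{m}{a}$, but the pointwise route sidesteps the case analysis of redistributing column sums.
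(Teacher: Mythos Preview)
Your proof is correct. The paper does not actually supply a proof of this corollary; it cites \v{C}ulik and positions the statement immediately after Arguments A and B to indicate it follows from them. Your upper bound via the pointwise inequality $c-(a-1)\le\binom{c}{a}$ is a neat shortcut around the convexity/redistribution route of Argument B that the paper implicitly points to (and that you yourself mention as an alternative): instead of arguing that the most balanced column-sum profile already violates Argument A once the total exceeds $(a-1)n+(b-1)\binom{m}{a}$, you bound each column's contribution separately and avoid any case analysis on how the excess is distributed. Your lower-bound construction is the standard one and matches what Argument B anticipates, namely a matrix with $(b-1)\binom{m}{a}$ columns of sum $a$ covering each $a$-subset of rows exactly $b-1$ times and the rest of sum $a-1$. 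Both approaches are equivalent in strength here; yours is marginally more self-contained since it does not invoke the ``balancing minimises the binomial sum'' step at all.
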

	\begin{argD}
		Take any row of any admissible matrix. If this row's ones lie in columns with sums $c_1,\dots,c_r$, the inequality $\sum_{i=1}^r\binom{c_i-1}{a-1}\le(b-1)\binom{m-1}{a-1}$ must hold, for otherwise (by argument A) there is an all-one $(a-1)\times b$ minor extendable to an all-one $a\times b$ minor through the ones in the chosen row.
	\end{argD}
	
	The above arguments all have a transposed form obtained by replacing ``columns'' with ``rows'' and vice versa. The following inclusion argument is also clear.
	\begin{argI}
		For $m'\le m$ and $n'\le n$, every $m'\times n'$ minor of every witness to $z_{a,b}(m,n)$ is admissible and thus has at most $z_{a,b}(m',n')$ ones.
	\end{argI}
	
	A useful explicit upper bound, with equality in a wider range of cases than that provided by Čulik's theorem \cite{culik}, is given by the following.
	\begin{theorem}[Roman \cite{roman}]\label{thm:packlimit}
		For all integers $p\ge a-1$
		\begin{equation*}
			z_{a,b}(m,n)\le\left\lfloor\frac{b-1}{\binom p{a-1}}\binom ma+\frac{(p+1)(a-1)}an\right\rfloor
		\end{equation*}
		and equality holds with $p=a$ or $p=a-1$ when $(b-1)\binom ma-aT_{a,b}(m)\le n$, where $T_{a,b}(m)$ is the largest size of a collection $C$ of not necessarily distinct $a+1$-subsets of a set $S$ with $m$ elements such that every $a$-subset of $S$ is covered by at most $b-1$ sets of $C$. The lower bound, which is approximately $\frac{b-1}{a+1}\binom ma$, may be reduced by $a-1$ if the covering is not perfect, i.e.\ $T_{a,b}(m)<\frac{b-1}{a+1}\binom ma$.
	\end{theorem}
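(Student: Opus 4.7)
The plan is to combine Argument A with a discrete tangent-line approximation of the convex function $f(c)=\binom{c}{a}$, and then exhibit an admissible matrix with the right column-sum profile to witness equality.

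For the upper bound, the first step is to verify the inequality $\binom{c}{a}\ge \binom{p}{a-1}(c-p)+\binom{p}{a}$ for all non-negative integers $c$ and all integers $p\ge a-1$. This is the secant-line bound between the consecutive lattice points $(p,\binom{p}{a})$ and $(p+1,\binom{p+1}{a})$ of $f$, and it holds by discrete convexity: the forward differences $\binom{c+1}{a}-\binom{c}{a}=\binom{c}{a-1}$ are non-decreasing in $c$. A direct manipulation using the identity $p\binom{p}{a-1}-\binom{p}{a}=\binom{p}{a-1}\cdot\frac{(p+1)(a-1)}{a}$ rewrites the inequality as $c\le\binom{c}{a}/\binom{p}{a-1}+(p+1)(a-1)/a$. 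Summing over all $n$ columns of an admissible matrix and applying Argument A's bound $\sum_i\binom{c_i}{a}\le(b-1)\binom{m}{a}$ gives exactly the right-hand side of the theorem, and taking the floor (since the total count of ones is an integer) finishes this half.

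For the equality case with $p=a$, the secant bound is tight precisely when every column sum lies in $\{a,a+1\}$. Writing $k$ for the number of columns of sum $a+1$, one has $N=an+k$ and $\sum_i\binom{c_i}{a}=ak+n$, so Argument A meets equality exactly when $k=((b-1)\binom{m}{a}-n)/a$. The construction I would use starts from a maximum collection $C^{\ast}$ realising $T_{a,b}(m)$, retains an arbitrary sub-multiset $C$ of size $k$ (feasible provided $k\le T_{a,b}(m)$, which is the hypothesis $n\ge(b-1)\binom{m}{a}-aT_{a,b}(m)$ rearranged), and pads with $b-1-\mu_{C}(A)$ extra columns of support $A$ for each $a$-subset $A$ of the $m$ rows, where $\mu_{C}(A)$ is the coverage of $A$ by the sets of $C$. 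The padding contributes exactly $(b-1)\binom{m}{a}-(a+1)k=n-k$ columns, and admissibility is preserved by construction. The $p=a-1$ branch just recovers \v{C}ulik's corollary and covers the regime $n\ge(b-1)\binom{m}{a}$.

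The main obstacle I anticipate is reconciling divisibility and floor issues: when $(b-1)\binom{m}{a}-n$ is not a multiple of $a$ the construction must settle for $k=\lfloor((b-1)\binom{m}{a}-n)/a\rfloor$, and a careful check is needed to confirm that the resulting $N=an+k$ still matches the floor in the stated upper bound. The parenthetical ``may be reduced by $a-1$'' seems to be a finer accounting for the case $T_{a,b}(m)<\frac{b-1}{a+1}\binom{m}{a}$: the packing of $(a+1)$-subsets then leaves residual coverage slack that cannot be absorbed by further $(a+1)$-columns, forcing a small loss in the total count of ones that I would track by comparing the construction directly against the exact value of $T_{a,b}(m)$ rather than its idealised upper limit.
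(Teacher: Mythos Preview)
The paper does not supply its own proof of this theorem: it is quoted verbatim from Roman \cite{roman} and used as a black box, so there is nothing in the paper to compare your argument against. That said, your outline is precisely Roman's original proof. The upper bound via the secant inequality
\[
\binom{c}{a}\ge\binom{p}{a-1}(c-p)+\binom{p}{a},
\]
rewritten as $c\le\binom{c}{a}/\binom{p}{a-1}+(p+1)(a-1)/a$ and summed against Argument~A, is exactly how Roman derives the inequality, and your identity $p\binom{p}{a-1}-\binom{p}{a}=\binom{p}{a-1}\cdot\frac{(p+1)(a-1)}{a}$ is the standard simplification.

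Your anticipated obstacle is not one: writing $r=(b-1)\binom{m}{a}-n$, the $p=a$ bound is $an+r/a$, whose floor is $an+\lfloor r/a\rfloor$; your construction with $k=\lfloor r/a\rfloor$ columns of sum $a+1$ and $n-k$ columns of sum $a$ gives exactly $an+k$ ones, so the floor is always attained, no case analysis on divisibility required. The feasibility check $n-k\le(b-1)\binom{m}{a}-(a+1)k$ reduces to $ak\le r$, which holds by choice of $k$. Your reading of the ``reduced by $a-1$'' clause is right in spirit: for $n$ in the window $(b-1)\binom{m}{a}-aT_{a,b}(m)-(a-1)\le n<(b-1)\binom{m}{a}-aT_{a,b}(m)$ one still has $\lfloor r/a\rfloor=T_{a,b}(m)$, and the construction with $k=T_{a,b}(m)$ goes through provided the residual capacity $(b-1)\binom{m}{a}-(a+1)T_{a,b}(m)$ for size-$a$ columns is positive, which is exactly the non-perfect hypothesis. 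When the packing is perfect that residual is zero and the extension fails, which is why the clause is conditional.
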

	
	This bound appears tighter or at least as tight as other general bounds in the literature, such as the one developed by Collins \cite{collins}, so it is the bound given in the tables in section \ref{sec:tables}.
	
	\subsection{\texorpdfstring{$T_{a,b}(m)$}{T(a,b)(m)}}
	
	The exact value of $T_{a,b}(m)$ for given $a$ and $b$ is an interesting problem in its own right, albeit one that decreases in importance for Zarankiewicz's problem as $a$ and $b$ increase since the $T$-function grows as $O(m^a)$. Guy showed in an earlier paper \cite{guy2} that
	\begin{gather*}
		T_{2,2}(m)=\left\lfloor\frac m3\left\lfloor\frac{m-1}2\right\rfloor\right\rfloor-[m\equiv5\bmod6]\\
		T_{2,b}(m)=\begin{cases}
	    \left\lfloor\frac{b-2}3\binom m2\right\rfloor+T_{2,2}(m)&2\mid m\land2\mid b\\
	    \left\lfloor\frac{b-1}3\binom m2\right\rfloor-[m\equiv5\bmod6\lor b\equiv5\bmod6]&\text{otherwise}
		\end{cases}
	\end{gather*}
	where $[P]$ is the Iverson bracket, evaluating to 1 if the predicate $P$ is true and 0 if $P$ is false, while Bao and Ji \cite{baoji} proved that
	\begin{equation*}
		T_{3,2}(m)=\left\lfloor\frac m4\left\lfloor\frac{m-1}3\left\lfloor\frac{m-2}2\right\rfloor\right\rfloor-[m\equiv0\bmod6]\right\rfloor
	\end{equation*}

    We found no corresponding results in the literature for $T_{3,3}(m)$ and $T_{4,4}(m)$, so to determine their values for small $m$ we used Gurobi (\url{https://gurobi.com}); these values are listed in table \ref{table:coverings}. For brevity only ``base $a+1$-subsets'' and a list of permutations are given for each case -- the actual covering is generated by the action of the permutations on each base subset individually, as done in lemma 2.1 of \cite{baoji}. $T_{3,3}(m)$ cases where $m\equiv2,4\bmod6$ are omitted because then a perfect $T_{3,2}(m)$ covering exists \cite{hanani}, so a perfect $T_{3,3}(m)$ covering can be obtained by duplication.
	
	\newcommand{\wrapp}[1]{\parbox[b]{.35\linewidth}{\vspace{1mm}#1}}
	\begin{table}[ht]
	    \caption{Optimal coverings for $T_{3,3}(m)$ and $T_{4,4}(m)$}\label{table:coverings}
	    \centering
	    \begin{tabular}{lll}
	        Value&Permutations&Base $a+1$-subsets\\
	        \midrule
	        $T_{3,3}(5)=5$&\texttt{(01234)}&\texttt{0123}\\
	        \midrule
	        $T_{3,3}(6)=9$&\texttt{(012345)}&\texttt{0134 0123}\\
	        \midrule
	        $T_{3,3}(7)=15$&\texttt{(01234)(5)(6)}&\texttt{0135 0136 0156}\\
	        \midrule
	        $T_{3,3}(9)=40$&\texttt{(01234567)(8)}&\texttt{0124 0125 0135 0238 0238}\\
	        \midrule
	        $T_{3,3}(11)=80$&\wrapp{\texttt{(01234)(56789)(A), (14)(23)(69)(78)}}&\wrapp{\texttt{0138 013A 018A 0289 0578 0578 0125 0159 0268 056A 068A}}\\
	        \midrule
	        $T_{3,3}(12)=108$&\wrapp{\texttt{(0123456789AB), (1B)(2A)(39)(48)(57)}}&\wrapp{\texttt{0167 0268 0123 0145 0149 0158 0246 0257 0136}}\\
	        \midrule
	        $T_{3,3}(13)=143$&\wrapp{\texttt{(0123456789ABC), (1C)(2B)(3A)(49)(58)(67)}}&\wrapp{\texttt{0159 0167 0269 0124 0139 0146 0258}}\\
	        \midrule
	        $T_{3,3}(15)=225$&\wrapp{\texttt{(0123456789ABCDE), (1E)(2D)(3C)(4B)(5A)(69)(78)}}&\wrapp{\texttt{0123 0145 014C 016A 0178 0257 026B 013B 0169 0248 0258}}\\
	        \midrule
	        $T_{3,3}(17)=340$&\wrapp{\texttt{(0123456789ABCDEFG), (1G)(2F)(3E)(4D)(5C)(6B)(7A)(89)}}&\wrapp{\texttt{013F 014E 0156 018A 0246 0279 027C 037D 0128 013C 014B 0159 025D 036B}}\\
	        \midrule
	        $T_{3,3}(18)=405$&\wrapp{\texttt{(0123456789ABCDEFGH), (1H)(2G)(3F)(4E)(5D)(6C)(7B)(8A)}}&\wrapp{\texttt{029B 039C 049D 0123 014F 0156 0167 0189 025F 028A 0138 014A 015C 0248 025D 026B 036A}}\\
	        \midrule
	        $T_{4,4}(6)=7$&\texttt{(01234)(5)}&\texttt{01234 01234 01235}\\
	        \midrule
	        $T_{4,4}(7)=21$&\texttt{(0123456), (013)(254)}&\texttt{01234}\\
	        \midrule
	        $T_{4,4}(8)=36$&\texttt{(012)(345)(67)}&\wrapp{\texttt{01236 01345 01346 01356 01456 03467 03567 04567}}\\
	        \midrule
	        $T_{4,4}(9)=69$&\texttt{(012345), (67)(8)}&\wrapp{\texttt{01246 01248 01267 01268 01346 01367 01468 02678 03678}}\\
	    \end{tabular}
	\end{table}

    \begin{theorem}\label{thm:tbound}
        \begin{equation*}
	        T_{a,b}(m)\le\left\lfloor\frac m{a+1}\left\lfloor\frac{b-1}a\binom{m-1}{a-1}\right\rfloor\right\rfloor
	    \end{equation*}
    \end{theorem}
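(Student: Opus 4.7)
The plan is to prove the bound by a two-level double counting argument, where the nested floors come from first applying a floor to a per-element degree bound and then applying a second floor after summing.

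Fix an optimal collection $C$ of $(a+1)$-subsets of $S$ with $|S|=m$. For each $x\in S$, let $d(x)$ denote the number of sets of $C$ containing $x$. I would first establish a local bound on $d(x)$ by counting pairs $(A,C_i)$ with $x\in A$, $|A|=a$, $A\subset C_i\in C$. Counting by $C_i$: each $C_i$ that contains $x$ has exactly $\binom{a}{a-1}=a$ such $a$-subsets $A$, so the count is $a\,d(x)$. Counting by $A$: there are $\binom{m-1}{a-1}$ $a$-subsets containing $x$, and each is covered at most $b-1$ times in $C$ by hypothesis, so the count is at most $(b-1)\binom{m-1}{a-1}$. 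Since $d(x)\in\mathbb{Z}$, this yields
\begin{equation*}
    d(x)\le\left\lfloor\frac{b-1}{a}\binom{m-1}{a-1}\right\rfloor.
\end{equation*}

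Next I would sum this inequality over all $x\in S$. Each $C_i\in C$ contributes $1$ to $d(x)$ for each of its $a+1$ elements, so $\sum_{x\in S}d(x)=(a+1)|C|$. Therefore
\begin{equation*}
    (a+1)|C|\le m\left\lfloor\frac{b-1}{a}\binom{m-1}{a-1}\right\rfloor,
\end{equation*}
and dividing by $a+1$ and applying the outer floor (valid because $|C|$ is an integer) gives the claimed bound.

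There is no real obstacle here; the only subtle point is the order in which the floors are applied. The inner floor is not automatic from the crude averaging bound $|C|\le\frac{b-1}{a+1}\binom{m}{a}$ (which one gets by double counting all pairs $(A,C_i)$ without fixing $x$), and is what makes the bound strictly sharper in cases where $a\nmid(b-1)\binom{m-1}{a-1}$. So the only care needed is to make the localised double count at a single element $x$ \emph{before} summing, so as to pick up the integrality of $d(x)$.
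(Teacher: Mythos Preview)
Your proof is correct and is essentially the same argument as the paper's: bound the degree $d(x)$ of each element via a local double count of $a$-subset--$(a{+}1)$-subset incidences through $x$, then sum the resulting per-element bound and use that $\sum_x d(x)=(a+1)|C|$. The paper compresses steps 2--4 of your argument into a single sentence, but the logic is identical.
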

    \begin{proof}
        For every $a+1$-subset $E$ in $C$ and any element $v\in E$, exactly $a$ of the $(b-1)\binom{m-1}{a-1}$ available $a$-subsets containing $v$ are covered by $E$, so at most $\left\lfloor\frac{b-1}a\binom{m-1}{a-1}\right\rfloor$ $a+1$-subsets of $C$ can contain $v$. Since $E$ is arbitrary and the number of $E$-$v$ incidences is always a multiple of $a+1$, the claimed upper bound follows.
    \end{proof}
    
    Theorem \ref{thm:tbound} proves the optimality of the $T_{3,3}(m)$ coverings listed in table \ref{table:coverings} except $T_{3,3}(7)$ and $T_{3,3}(11)$, as well as the $T_{4,4}(7)$ covering, since the upper bound is attained in these cases.
	
	\section{Method}\label{sec:method}
	
	Beyond the range of arguments for which theorem \ref{thm:packlimit} gives a proven exact value for the $z$-function, the SAT-based approach calls for encoding an instance of the problem with $a,b,m,n$ and a guess $w$ for the corresponding $z$ into one or more CNFs, conjunctions (AND) of clauses or disjunctions (OR) of Boolean variables. The basic encoding is very simple: one variable for each entry of the $m\times n$ $(0,1)$-matrix $A$, one clause for each and every $a\times b$ minor in rows $r_1,\dots,r_a$ and columns $c_1,\dots,c_b$
	\begin{equation*}
		\bigvee_{i=1}^a\bigvee_{j=1}^b\neg a_{ij}
	\end{equation*}
	and a cardinality constraint requiring $A$ to have exactly $w$ ones (its encoding details are discussed below). Any solution to this CNF forms an admissible matrix, proving $z\ge w$; conversely if the instance is unsatisfiable (UNSAT) this indicates $z<w$. Most SAT solvers have an option to output a concrete, machine-verifiable UNSAT proof if the instance turns out that way \cite{drattrim}.
	
	To this basic scheme we add some major optimisations, without which extending the range of known Zarankiewicz function values would not be possible.
	
	\subsection{Generating partitions}
	
	An admissible or maximal matrix clearly remains as such under all row and column permutations. It is therefore enough for a given $w$ to solve instances where the row and column sums are \textit{fixed}, over all possible combinations of \textit{unordered} row and column partitions not forbidden by the arguments of section \ref{sec:eba} -- an approach very much like Heule's cube-and-conquer paradigm \cite{cubeandconquer}. To generate all such partitions efficiently we use algorithm \ref{alg:admpart}.
	
	\begin{algorithm}
		\caption{Admissible (by arguments A and I) column partition generator}\label{alg:admpart}
		\begin{algorithmic}[1]
			\State $p\gets$ empty stack \Comment{workspace for building up partitions}
			\Procedure{P}{$a$, $b$, $m$, $n$, $w$}
			\State $L_A\gets(b-1)\binom ma$ \Comment{only set at procedure start, immutable afterwards}
			\If{$\sum_i\binom{p_i}a>L_A$ or $\sum p>z_{a,b}(m,|p|)$} \Comment{$|p|$ is the current length of $p$}
			\State \textbf{return}
			\ElsIf{$w=0$}
			\State \textbf{output the contents of} $p$
			\ElsIf{$k>(m-1)n$} \Comment{by the pigeonhole principle, some further columns must sum to $m$}
			\State $d\gets k-(m-1)n$
			\State push $m$ $d$ times onto $p$
			\State $\operatorname{P}(a,\,b,\,m,\,n-d,\,w-dm)$
			\State pop $d$ times from $p$
			\Else
			\For{$t\in[\lceil w/n\rceil,\min(w,m)]$} \Comment{all possible values for the next part}
			\State push $t$ onto $p$
			\State $\operatorname{P}(a,\,b,\,t,\,n-1,\,w-t)$
			\State pop from $p$
			\EndFor
			\EndIf
			\EndProcedure
		\end{algorithmic}
	\end{algorithm}
	\begin{theorem}
		Algorithm \ref{alg:admpart} generates all admissible partitions for an $m\times n$ matrix, $a\times b$ minor and $w$ ones in lexicographic order -- partitions of $w$ into $n$ parts in $[0,m]$ -- with the parts in each partition listed in non-increasing order.
	\end{theorem}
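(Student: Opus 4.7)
The plan is to prove the four distinct claims bundled into the statement (every output satisfies arguments A and I, every admissible partition is output, parts are non-increasing, and the global order is lexicographic) simultaneously by induction on the recursion depth of $\operatorname{P}$. The key invariant is that at every call $\operatorname{P}(a,b,m,n,w)$ the stack $p$ already holds a non-increasing prefix whose entries lie in $[p_{|p|},m]$ reading backwards, and such that the prefix column sums already satisfy both $\sum_i\binom{p_i}a\le L_A$ and $\sum p\le z_{a,b}(m,|p|)$; the task of the current call is then to enumerate all admissible completions of length $n$ summing to $w$ with each part in $[0,m]$, in lexicographic order.

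First I would handle the monotonicity and range invariants. Because the recursive call $\operatorname{P}(a,b,t,n-1,w-t)$ replaces the bound $m$ by the value $t$ just pushed, induction on depth gives the non-increasing property. The loop range $t\in[\lceil w/n\rceil,\min(w,m)]$ is exactly the set of values that can actually occur as the next part: $t\le m$ preserves non-increasingness, $t\le w$ is forced by the remaining sum, and since every subsequent part is capped at $t$ we need $nt\ge w$, i.e.\ $t\ge\lceil w/n\rceil$. Next I would dispatch admissibility: both pruning quantities are monotone in $|p|$, since $\binom{p_i}a\ge0$ makes $\sum_i\binom{p_i}a$ non-decreasing as parts are appended, and since the leading $|p|$ columns themselves form an $m\times|p|$ minor forced by argument I to contain at most $z_{a,b}(m,|p|)$ ones. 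Thus whenever the check fires, no extension of $p$ can ever recover admissibility, so no truly admissible partition is discarded, while every partition surviving to the $w=0$ leaf satisfies both arguments on the whole column multiset.

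The pigeonhole shortcut requires a small separate verification. When $w>(m-1)n$, any completion has $n$ parts summing to $w$ but none exceeding $m$; pigeonhole forces at least $d:=w-(m-1)n$ of them to equal $m$, and non-increasingness forces these to be the first $d$. Hence the block obtained by pushing $m$ $d$ times and recursing is exactly the set of admissible completions and is processed in lexicographic order by the inductive hypothesis; crucially, the loop branch was unreachable here because for $t<m$ we would have $nt<w$, violating the lower bound $\lceil w/n\rceil$. So no outputs are lost and none are duplicated.

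Finally, for the global ordering, I would argue by induction on $n$: within each top-level call the loop iterates $t$ from small to large, and by hypothesis each inner invocation outputs its block in lexicographic order; since any partition with next part $t$ is lexicographically smaller than any with next part $t'>t$ (agreeing on the prefix $p$), concatenating the blocks yields the full lexicographic enumeration. The main obstacle, I expect, is the pigeonhole branch: one must be careful to show both that it is exhaustive (the non-increasing constraint is what makes ``at least $d$ $m$'s'' translate to ``exactly the first $d$ entries are $m$'') and that its position in the enumeration is consistent with the loop-based case, which reduces as above to checking that the skipped values of $t$ are infeasible rather than merely pruned.
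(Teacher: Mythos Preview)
Your proposal is correct and follows essentially the same route as the paper's proof: both obtain the non-increasing property from the recursive replacement $m\leftarrow t$, justify the loop bounds $[\lceil w/n\rceil,\min(w,m)]$ by an averaging/pigeonhole argument, derive lexicographic order from the increasing iteration over $t$ combined with induction on depth, and treat the $w>(m-1)n$ branch as a shortcut for $d$ forced iterations with $t=m$. Your handling of that shortcut---checking that it neither loses nor duplicates outputs and sits correctly in the lex order---is in fact more explicit than the paper's one-line remark. The one place where the paper says slightly more than you do is the completeness direction of the argument-I pruning: the paper stresses that non-increasing order makes each prefix the \emph{most pessimal} $n'$-column selection, so that passing every prefix check guarantees every $n'$-subset of columns satisfies $\sum p_i\le z_{a,b}(m,n')$; you assert that survivors satisfy argument I ``on the whole column multiset'' but only justify it via the prefix minor, leaving the extension to arbitrary column subsets implicit.
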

	\begin{proof}
		Ignoring lines 4 and 8--12 for now, the recursive call to $\operatorname P$ in line 16 specifies an upper part limit of the last (topmost) element $t$ of the stack $p$, so part sizes do not increase from left to right. By the pigeonhole principle the largest part of a partition of $w$ into $n$ parts is at least $\lceil w/n\rceil$, so this is the lower bound for $t$; the upper bound of $\min(w,m)$ is trivial. Because $t$ is varied through all its possible values in increasing order at every point in the recursion tree, the partitions are output in lexicographic order.
		
		Lines 8--12 avoid unnecessary recursive calls to $\operatorname P$ when there is only one possible value for $t$. The admissibility checks in line 4 depend on the non-increasing partition ordering, which in turn ensures that the first $n'$ column sums in $p$ for any $n'<n$ are the most pessimal choice for the column sums of an $m\times n'$ minor of the $m\times n$ matrix; if this minor partition is admissible then all other $m\times n'$ minor partitions in $p$ are admissible because $\sum_i\binom{p_i}a$ for argument A and $\sum p>z_{a,b}(m,|p|)$ for argument I cannot be higher for the other partitions.
		
		Because line 4 is executed in every call to $\operatorname P$, branches of the recursion tree leading to only inadmissible partitions are pruned as soon as possible.
	\end{proof}
	The algorithm to generate row partitions is similar. Once all possible row and column partitions have been obtained argument D can then be used to remove partition pairs (considering the row with the most, $r$, ones and the $r$ columns with the least ones -- if argument D fails for this most pessimal column choice it must also fail for all other column choices -- and vice versa).
	
	\subsection{Cardinality constraints}
	
	To express that exactly $k$ out of $n$ bits $b_1,\dots,b_n$ should be true we use the equality variant of Sinz's sequential counter encoding \cite{sinz} as described, tested and deemed fastest for general use among different cardinality constraint encodings by Wynn \cite{cardconstraint}. $k(n-k)$ auxiliary variables $a_{i,j}$ are used where $1\le i\le k$ and $1\le j\le n-k$, with the following clauses (all literals $a_{i,j}$ with $i$ or $j$ outside their specified ranges are dropped):
	\begin{gather*}
		\bigwedge_{i=1}^k\bigwedge_{j=1}^{n-k-1}\neg a_{i,j}\lor a_{i,j+1}\qquad\bigwedge_{i=0}^k\bigwedge_{j=1}^{n-k}\neg a_{i,j}\lor a_{i+1,j}\lor\neg b_{i+j}\\
		\bigwedge_{i=1}^{k-1}\bigwedge_{j=1}^{n-k}a_{i,j}\lor\neg a_{i+1,j}\qquad\bigwedge_{i=1}^k\bigwedge_{j=0}^{n-k}a_{i,j}\lor\neg a_{i,j+1}\lor b_{i+j}
	\end{gather*}
	
	This encoding has two desirable properties:
	\begin{itemize}
		\item If a partial assignment of the $b_i$ is such that said assignment cannot be completed without violating the cardinality constraint, unit propagation alone will lead to a contradiction (empty clause).
		\item If exactly $k$ of the $b_i$ are assigned true, unit propagation alone will assign the other $b_i$ false.
	\end{itemize}
	
	Since unit propagation is hardwired into all state-of-the-art SAT solvers, using the above encoding should result in faster rejection of partially filled matrices that cannot be completed to an admissible matrix.
	
	\subsection{Lexicographic constraints}
	
	\begin{figure}
		\centering
		\begin{equation*}
			\begin{bmatrix}
				0&1&1&1&1\\
				1&1&0&1&0\\
				0&0&0&0&1\\
				0&1&0&1&1\\
				0&1&0&0&0
			\end{bmatrix}\xrightarrow{\text{sort rows}}
			\begin{bmatrix}
				1&1&0&1&0\\
				0&1&1&1&1\\
				0&1&0&1&1\\
				0&1&0&0&0\\
				0&0&0&0&1
			\end{bmatrix}\xrightarrow{\text{sort columns}}
			\begin{bmatrix}
				1&1&0&1&0\\
				0&1&1&1&1\\
				0&1&0&1&1\\
				0&1&0&0&0\\
				0&0&0&0&1
			\end{bmatrix}
		\end{equation*}
		\caption{Reverse-lexicographically sorting a $(0,1)$-matrix to a fixed point.}
		\label{fig:lexsort}
	\end{figure}
	
	\begin{figure}
		\centering
		\begin{tikzpicture}
			\begin{scope}[scale=0.5]
				\fill (0,3) rectangle (1,2) (0,2) rectangle (1,1) (0,1) rectangle (1,0) (1,6) rectangle (2,5) (1,5) rectangle (2,4) (1,4) rectangle (2,3) (2,7) rectangle (3,6) (2,4) rectangle (3,3) (2,1) rectangle (3,0) (3,7) rectangle (4,6) (3,5) rectangle (4,4) (3,2) rectangle (4,1) (4,7) rectangle (5,6) (4,6) rectangle (5,5) (4,3) rectangle (5,2) (5,8) rectangle (6,7) (5,4) rectangle (6,3) (5,3) rectangle (6,2) (6,8) rectangle (7,7) (6,5) rectangle (7,4) (6,1) rectangle (7,0) (7,8) rectangle (8,7) (7,6) rectangle (8,5) (7,2) rectangle (8,1);
				\draw[black!50] (0,0) rectangle (8,8);
			\end{scope}
			\begin{scope}[scale=0.5,xshift=9cm]
				\fill (0,3) rectangle (1,2) (0,2) rectangle (1,1) (0,1) rectangle (1,0) (1,5) rectangle (2,4) (1,4) rectangle (2,3) (1,1) rectangle (2,0) (2,6) rectangle (3,5) (2,4) rectangle (3,3) (2,2) rectangle (3,1) (3,7) rectangle (4,6) (3,4) rectangle (4,3) (3,3) rectangle (4,2) (4,7) rectangle (5,6) (4,6) rectangle (5,5) (4,5) rectangle (5,4) (5,8) rectangle (6,7) (5,5) rectangle (6,4) (5,2) rectangle (6,1) (6,8) rectangle (7,7) (6,6) rectangle (7,5) (6,3) rectangle (7,2) (7,8) rectangle (8,7) (7,7) rectangle (8,6) (7,1) rectangle (8,0);
				\draw[black!50] (0,0) rectangle (8,8);
			\end{scope}
		\end{tikzpicture}
		\caption{Two non-identical yet isomorphic maximal matrices (for $a=b=2$, $m=n=8$) that satisfy all constraints in section \ref{sec:method}.}
		\label{fig:isomats}
	\end{figure}
	
	Even with fixed row and column sums, there still remain the symmetries of swapping two rows or two columns with the \textit{same} sum. These symmetries are broken by requiring groups of rows or columns with the same sum to be contiguous and lexicographically sorted; every $(0,1)$-matrix can be permuted to satisfy this property by the following theorem.
	
	\begin{theorem}
		Lexicographically sorting rows and columns of any $(0,1)$-matrix $A$ alternately as in Figure \ref{fig:lexsort} will reach a fixed point (both rows and columns sorted) in a finite number of steps. This remains true even if the sets of rows and columns are partitioned so that rows and columns cannot move across partitions.
	\end{theorem}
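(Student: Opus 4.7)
The plan is to exhibit a bounded, weakly monotone potential function on $(0,1)$-matrices under each of the two sort operations, and then use injectivity of the potential to conclude that the matrix itself stabilises. A natural choice is $\phi(A)$, the nonnegative integer whose binary expansion, most significant bit first, is the row-major listing $a_{1,1}a_{1,2}\cdots a_{1,n}a_{2,1}\cdots a_{m,n}$ of the entries of $A$. This takes at most $2^{mn}$ distinct values and the assignment $A\mapsto\phi(A)$ is a bijection onto its image.

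I would first check that row-wise descending lex sort weakly increases $\phi$. Since $\phi$ is the concatenation of the rows interpreted as a binary numeral, and concatenation of equal-length strings is lex-maximised precisely when the strings themselves are arranged in descending lex order, reordering rows in descending lex order realises the largest $\phi$ attainable by any row permutation.

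Next I would verify the column case: descending lex sort of columns, using each column's top-to-bottom reading as the key, maximises $\phi$ over column permutations. The most significant positions of $\phi$ run through row $1$ first, so the primary sort criterion on columns is the row-$1$ entry; among columns tied on row $1$, the next most significant block scans row $2$, making row $2$ the secondary key; and so on down to row $m$. The key ``(row $1$ entry, row $2$ entry, \ldots, row $m$ entry)'' of a column is exactly its top-to-bottom string, so descending lex sort on columns coincides with the $\phi$-maximising column permutation. The partitioned case adds nothing new: each sort merely maximises $\phi$ subject to the partition constraint, preserving weak monotonicity.

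With monotonicity in hand, the argument finishes because $\phi$ has finite range, so the sequence of potentials eventually stabilises, and by injectivity of $\phi$ the matrix itself eventually stabilises --- that is, we reach a fixed point. The subtle step is the column-sort claim; I expect the main obstacle to be making the bit-significance argument fully rigorous, most cleanly by an adjacent-swap exchange argument showing that whenever two neighbouring columns are out of descending lex order, swapping them strictly increases $\phi$ by examining the first row in which they differ.
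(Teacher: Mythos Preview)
Your proof is correct and follows the same overall strategy as the paper: exhibit an integer-valued, bounded potential that never decreases under either sort operation, then conclude termination. The difference is in the choice of potential. The paper uses $f(A)=\sum_{i,j}2^{i+j}a_{ij}$, which is symmetric under transposition, so the row and column cases are literally the same computation; strict increase under any out-of-order swap is shown by a one-line calculation $(2^b-2^a)(n_a-n_b)>0$. Your $\phi$, the row-major binary reading, is not transpose-symmetric, which is why you need the separate bit-significance argument for columns; on the other hand your $\phi$ is injective, so you can pass directly from ``$\phi$ stabilises'' to ``$A$ stabilises'' without ever computing the effect of a swap. Both trade-offs are fine, and the partitioned case is handled identically in both proofs.
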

	
	\begin{proof}
		With rows and columns indexed starting from 0, define $f(A)=\sum_i\sum_j2^{i+j}a_{ij}$. Swapping rows/columns $a$ and $b$ where $a<b$ but the numerical value $n_a$ of column $a$ is greater than $n_b$ changes $f(A)$ by $2^bn_a+2^an_b-2^an_a-2^bn_b=(2^b-2^a)(n_a-n_b)>0$, i.e.\ sorting two out-of-order rows/columns strictly increases (or decreases, if sorting in reverse order) $f(A)$, which is clearly integral and bounded by 0 from below and $\sum_i\sum_j2^{i+j}$ from above. Since there are a finite number of possibilities for each value in the strictly monotone sequence of $f(A)$'s generated, it must terminate at a point when $A$ is sorted \textit{both} in rows and columns.
	\end{proof}
	
	Given two equal-length strings of Boolean variables $a_1,\dots,a_n$ and $b_1,\dots,b_n$, the binary number represented by the $a_i$ may be constrained to be \textit{at most} that represented by the $b_i$ (where $a_1,b_1$ are most significant) through $n-1$ auxiliary variables $c_1,\dots,c_{n-1}$ and the clauses ($c_0$ and $c_n$ are dropped)
	\begin{gather*}
		\bigwedge_{i=1}^{n-2}\neg c_i\lor c_{i+1}\\
		\bigwedge_{i=1}^n c_{i-1}\lor\neg a_i\lor b_i\qquad\bigwedge_{i=1}^n c_{i-1}\lor a_i\lor b_i\lor\neg c_i\\
		\bigwedge_{i=1}^n c_{i-1}\lor\neg a_i\lor\neg b_i\lor\neg c_i\qquad\bigwedge_{i=1}^n c_{i-1}\lor a_i\lor\neg b_i\lor c_i
	\end{gather*}
	
	In our application of this form of symmetry breaking to the problem at hand the sort order is reversed: 1 comes before 0. The cardinality and lexicographic constraints do not remove all symmetries of a $(0,1)$-matrix (see figure \ref{fig:isomats}) -- doing so would require solving the graph isomorphism problem -- but they are nevertheless very useful in reducing the number of instance solutions.
	
	\subsection{Software}
	
	All SAT solving was done with Kissat \cite{kissat} on one laptop computer with the \texttt{--sat} and \texttt{--unsat} flags set according to whether or not a solution was expected, and no other settings touched. The maximal matrices in the $m=n$ case were filtered to remove isomorphs using the \texttt{shortg} utility in nauty \cite{nauty}; the automorphism groups of the corresponding bipartite graphs were computed using GAP (\url{https://gap-system.org}).
	
	The partitioning and CNF-building code written for this project, together with the raw results obtained, is available in our Kyoto repository \cite{kyoto}.
	
	\section{Tables for the Zarankiewicz function}\label{sec:tables}
	
	The following three tables are corrected and extended versions of the tables for $z_a(m,n)$ given in Guy \cite{guy} where $a=2,3,4$. Values above solid lines are both exact and given by theorem \ref{thm:packlimit}; the dashed lines indicate the limits of Guy's tables and grey backgrounds indicate errors Guy made. A bold value is exact, proven by the methods in this paper; other values are the upper bounds given by theorem \ref{thm:packlimit}.
	
	\subsection{Discussion}
	
	The last section of Héger's thesis \cite{heger} is devoted to proving exact values and tighter bounds for $z_2(m,n)$, which is closely related to finite geometries by Reiman's construction \cite{reiman}: the point-line incidence matrix of the projective plane of prime power order $q$ furnishes a maximal matrix for $z_2(q^2+q+1)$, showing that it is equal to $(q+1)(q^2+q+1)$. Héger collects the new results into another table for $z_2(m,n)$, which has some values marked exact that are not marked as such in table \ref{table:2x2}, but that table comes with a caveat:
	\begin{quotation}
		In some cases we did rely on the exact values reported by Guy. Possibly undiscovered inaccuracies there may result in inaccurate values here as well.
	\end{quotation}
	
	By computing the $z$-values through an independent method we have completed the list of errors in Guy's $z_a(m,n)$ tables -- there are only eight such errors, all in the $z_2(m,n)$ table, and all are too low by just one. There are meanwhile no discrepancies in the values marked as exact in both Héger's table and table \ref{table:2x2}, where Héger did not rely on Guy.
	
	The link to finite geometries does not carry over to larger minor sizes, where the bound of theorem \ref{thm:packlimit} appears to be less sharp, particularly when $m\approx n$. Better bounds at the edges of the exact region can often be derived by applying the arguments of section \ref{sec:eba} to eliminate all possible partitions and hence the need for any SAT solving; this gives for example $z_4(11,14)\le106$.
	
	\begin{landscape}
		\begin{table}
			\centering
			\begin{tikzpicture}
				\matrix(A)[matrix of nodes,every cell/.style={anchor=base east}] {
					$m\setminus n$&2&3&4&5&6&7&8&9&10&11&12&13&14&15&16&17&18&19&20&21&22&23&24&25&26&27&28&29&30&31&32&33&34&35\\
					\hline
					2&\textbf{3}&\textbf{4}&\textbf{5}&\textbf{6}&\textbf{7}&\textbf{8}&\textbf{9}&\textbf{10}&\textbf{11}&\textbf{12}&\textbf{13}&\textbf{14}&\textbf{15}&\textbf{16}&\textbf{17}&\textbf{18}&\textbf{19}&\textbf{20}&\textbf{21}&\textbf{22}&\textbf{23}&\textbf{24}&\textbf{25}&\textbf{26}&\textbf{27}&\textbf{28}&\textbf{29}&\textbf{30}&\textbf{31}&\textbf{32}&\textbf{33}&\textbf{34}&\textbf{35}&\textbf{36}\\
					3&&\textbf{6}&\textbf{7}&\textbf{8}&\textbf{9}&\textbf{10}&\textbf{11}&\textbf{12}&\textbf{13}&\textbf{14}&\textbf{15}&\textbf{16}&\textbf{17}&\textbf{18}&\textbf{19}&\textbf{20}&\textbf{21}&\textbf{22}&\textbf{23}&\textbf{24}&\textbf{25}&\textbf{26}&\textbf{27}&\textbf{28}&\textbf{29}&\textbf{30}&\textbf{31}&\textbf{32}&\textbf{33}&\textbf{34}&\textbf{35}&\textbf{36}&\textbf{37}&\textbf{38}\\
					4&&&\textbf{9}&\textbf{10}&\textbf{12}&\textbf{13}&\textbf{14}&\textbf{15}&\textbf{16}&\textbf{17}&\textbf{18}&\textbf{19}&\textbf{20}&\textbf{21}&\textbf{22}&\textbf{23}&\textbf{24}&\textbf{25}&\textbf{26}&\textbf{27}&\textbf{28}&\textbf{29}&\textbf{30}&\textbf{31}&\textbf{32}&\textbf{33}&\textbf{34}&\textbf{35}&\textbf{36}&\textbf{37}&\textbf{38}&\textbf{39}&\textbf{40}&\textbf{41}\\
					5&&&&\textbf{12}&\textbf{14}&\textbf{15}&\textbf{17}&\textbf{18}&\textbf{20}&\textbf{21}&\textbf{22}&\textbf{23}&\textbf{24}&\textbf{25}&\textbf{26}&\textbf{27}&\textbf{28}&\textbf{29}&\textbf{30}&\textbf{31}&\textbf{32}&\textbf{33}&\textbf{34}&\textbf{35}&\textbf{36}&\textbf{37}&\textbf{38}&\textbf{39}&\textbf{40}&\textbf{41}&\textbf{42}&\textbf{43}&\textbf{44}&\textbf{45}\\
					6&&&&&\textbf{16}&\textbf{18}&\textbf{19}&\textbf{21}&\textbf{22}&\textbf{24}&\textbf{25}&\textbf{27}&\textbf{28}&\textbf{30}&\textbf{31}&\textbf{32}&\textbf{33}&\textbf{34}&\textbf{35}&\textbf{36}&\textbf{37}&\textbf{38}&\textbf{39}&\textbf{40}&\textbf{41}&\textbf{42}&\textbf{43}&\textbf{44}&\textbf{45}&\textbf{46}&\textbf{47}&\textbf{48}&\textbf{49}&\textbf{50}\\
					7&&&&&&\textbf{21}&\textbf{22}&\textbf{24}&\textbf{25}&\textbf{27}&\textbf{28}&\textbf{30}&\textbf{31}&\textbf{33}&\textbf{34}&\textbf{36}&\textbf{37}&\textbf{39}&\textbf{40}&\textbf{42}&\textbf{43}&\textbf{44}&\textbf{45}&\textbf{46}&\textbf{47}&\textbf{48}&\textbf{49}&\textbf{50}&\textbf{51}&\textbf{52}&\textbf{53}&\textbf{54}&\textbf{55}&\textbf{56}\\
					8&&&&&&&\textbf{24}&\textbf{26}&\textbf{28}&\textbf{30}&\textbf{32}&\textbf{33}&\textbf{35}&\textbf{36}&\textbf{38}&\textbf{39}&\textbf{41}&\textbf{42}&\textbf{44}&\textbf{45}&\textbf{47}&\textbf{48}&\textbf{50}&\textbf{51}&\textbf{53}&\textbf{54}&\textbf{56}&\textbf{57}&\textbf{58}&\textbf{59}&\textbf{60}&\textbf{61}&\textbf{62}&\textbf{63}\\
					9&&&&&&&&\textbf{29}&\textbf{31}&\textbf{33}&\textbf{36}&\textbf{37}&\textbf{39}&\textbf{40}&\textbf{42}&\textbf{43}&\textbf{45}&\textbf{46}&\textbf{48}&\textbf{49}&\textbf{51}&\textbf{52}&\textbf{54}&\textbf{55}&\textbf{57}&\textbf{58}&\textbf{60}&\textbf{61}&\textbf{63}&\textbf{64}&\textbf{66}&\textbf{67}&\textbf{69}&\textbf{70}\\
					10&&&&&&&&&\textbf{34}&\textbf{36}&\textbf{39}&\textbf{40}&\textbf{42}&\textbf{44}&\textbf{46}&\textbf{47}&\textbf{49}&\textbf{51}&\textbf{52}&\textbf{54}&\textbf{55}&\textbf{57}&\textbf{58}&\textbf{60}&\textbf{61}&\textbf{63}&\textbf{64}&\textbf{66}&\textbf{67}&\textbf{69}&\textbf{70}&\textbf{72}&\textbf{73}&\textbf{75}\\
					11&&&&&&&&&&\textbf{39}&\textbf{42}&\textbf{44}&\textbf{45}&\textbf{47}&\textbf{50}&\textbf{51}&\textbf{53}&\textbf{55}&\textbf{57}&\textbf{59}&\textbf{60}&\textbf{62}&\textbf{63}&\textbf{65}&\textbf{66}&\textbf{68}&\textbf{69}&\textbf{71}&\textbf{72}&\textbf{74}&\textbf{75}&\textbf{77}&\textbf{78}&\textbf{80}\\
					12&&&&&&&&&&&\textbf{45}&\textbf{48}&\textbf{49}&\textbf{51}&\textbf{53}&\textbf{55}&\textbf{57}&\textbf{60}&\textbf{61}&\textbf{63}&\textbf{65}&\textbf{66}&\textbf{68}&\textbf{70}&\textbf{72}&\textbf{73}&\textbf{75}&\textbf{76}&\textbf{78}&\textbf{79}&\textbf{81}&\textbf{82}&\textbf{84}&\textbf{85}\\
					13&&&&&&&&&&&&\textbf{52}&\textbf{53}&\textbf{55}&\textbf{57}&\textbf{59}&\textbf{61}&\textbf{64}&\textbf{66}&\textbf{67}&\textbf{69}&\textbf{71}&\textbf{73}&\textbf{75}&\textbf{78}&\textbf{79}&\textbf{81}&\textbf{82}&\textbf{84}&\textbf{85}&\textbf{87}&\textbf{88}&\textbf{90}&\textbf{91}\\
					14&&&&&&&&&&&&&\textbf{56}&\textbf{58}&\textbf{60}&\textbf{63}&\textbf{65}&\textbf{68}&\textbf{70}&\textbf{72}&\textbf{73}&\textbf{75}&|[fill=black!25]|\textbf{78}&|[fill=black!25]|\textbf{80}&|[fill=black!25]|\textbf{82}&|[fill=black!25]|\textbf{84}&|[fill=black!25]|\textbf{86}&\textbf{87}&\textbf{89}&\textbf{91}&\textbf{92}&\textbf{94}&\textbf{96}&\textbf{98}\\
					15&&&&&&&&&&&&&&|[fill=black!25]|\textbf{61}&|[fill=black!25]|\textbf{64}&|[fill=black!25]|\textbf{67}&\textbf{69}&\textbf{72}&\textbf{75}&\textbf{77}&\textbf{78}&\textbf{80}&\textbf{82}&\textbf{85}&\textbf{86}&\textbf{88}&\textbf{91}&\textbf{93}&\textbf{95}&\textbf{96}&\textbf{98}&\textbf{100}&\textbf{102}&\textbf{105}\\
					16&&&&&&&&&&&&&&&\textbf{67}&\textbf{70}&\textbf{73}&\textbf{76}&\textbf{80}&\textbf{81}&\textbf{83}&\textbf{85}&\textbf{87}&\textbf{90}&\textbf{91}&\textbf{93}&\textbf{96}&\textbf{98}&\textbf{100}&\textbf{102}&\textbf{103}&106&108&110\\
					17&&&&&&&&&&&&&&&&\textbf{74}&\textbf{77}&\textbf{80}&\textbf{84}&\textbf{85}&\textbf{87}&\textbf{89}&\textbf{91}&\textbf{94}&\textbf{96}&\textbf{98}&\textbf{101}&\textbf{102}&105&107&109&111&113&115\\
					18&&&&&&&&&&&&&&&&&\textbf{81}&\textbf{84}&\textbf{88}&\textbf{90}&\textbf{91}&\textbf{93}&\textbf{96}&\textbf{99}&\textbf{101}&\textbf{103}&107&109&111&113&115&117&119&121\\
					19&&&&&&&&&&&&&&&&&&\textbf{88}&\textbf{92}&\textbf{95}&\textbf{96}&\textbf{98}&\textbf{100}&\textbf{103}&\textbf{106}&110&112&115&117&119&121&123&125&127\\
					20&&&&&&&&&&&&&&&&&&&\textbf{96}&\textbf{100}&\textbf{101}&\textbf{103}&\textbf{105}&\textbf{108}&\textbf{111}&115&117&120&122&125&127&129&131&133\\
					21&&&&&&&&&&&&&&&&&&&&\textbf{105}&\textbf{106}&\textbf{108}&\textbf{110}&115&117&120&122&125&127&130&132&135&137&140\\
					22&&&&&&&&&&&&&&&&&&&&&\textbf{108}&\textbf{110}&\textbf{114}&120&122&125&127&130&132&135&137&140&142&145\\
					23&&&&&&&&&&&&&&&&&&&&&&\textbf{115}&\textbf{118}&125&128&130&133&135&138&140&143&145&148&150\\
					24&&&&&&&&&&&&&&&&&&&&&&&\textbf{122}&130&133&136&139&141&144&146&149&151&154&156\\
				};
				\draw (A-1-1.north east) -- (A-24-1.south east);
				\draw (A-8-8.south west) -- (A-8-8.north west) -- (A-8-10.north east) -- (A-8-10.south east) -- (A-8-11.south east) -- (A-9-11.south east) -- (A-9-17.south east) -- (A-10-17.south east) -- (A-10-19.south east) -- (A-11-19.south east) -- (A-11-24.south east) -- (A-12-24.south east) -- (A-12-25.south east) -- (A-13-25.south east) -- (A-13-33.south east) -- (A-14-33.south east) -- (A-14-34.south east) -- (A-15-34.south east) -- (A-15-35.south east);
				\draw[dashed] (A-16-16.south west) -- (A-16-16.north west) -- (A-16-19.north east) -- (A-16-19.south east) -- (A-16-20.south east) -- (A-16-20.north east) -- (A-16-21.north east) -- (A-14-21.south east) -- (A-14-28.south east) -- (A-13-28.south east);
			\end{tikzpicture}
			\caption{$z_2(m,n)$}
			\label{table:2x2}
		\end{table}
	\end{landscape}
	
	\newpage
	
	\begin{table}[h]
		\centering
		\begin{tikzpicture}
			\matrix(A)[matrix of nodes,every cell/.style={anchor=base east}] {
				$m\setminus n$&3&4&5&6&7&8&9&10&11&12&13&14&15&16&17&18&19&20&21&22&23\\
				\hline
				3&\textbf{8}&\textbf{10}&\textbf{12}&\textbf{14}&\textbf{16}&\textbf{18}&\textbf{20}&\textbf{22}&\textbf{24}&\textbf{26}&\textbf{28}&\textbf{30}&\textbf{32}&\textbf{34}&\textbf{36}&\textbf{38}&\textbf{40}&\textbf{42}&\textbf{44}&\textbf{46}&\textbf{48}\\
				4&&\textbf{13}&\textbf{16}&\textbf{18}&\textbf{21}&\textbf{24}&\textbf{26}&\textbf{28}&\textbf{30}&\textbf{32}&\textbf{34}&\textbf{36}&\textbf{38}&\textbf{40}&\textbf{42}&\textbf{44}&\textbf{46}&\textbf{48}&\textbf{50}&\textbf{52}&\textbf{54}\\
				5&&&\textbf{20}&\textbf{22}&\textbf{25}&\textbf{28}&\textbf{30}&\textbf{33}&\textbf{36}&\textbf{38}&\textbf{41}&\textbf{44}&\textbf{46}&\textbf{49}&\textbf{52}&\textbf{54}&\textbf{57}&\textbf{60}&\textbf{62}&\textbf{64}&\textbf{66}\\
				6&&&&\textbf{26}&\textbf{29}&\textbf{32}&\textbf{36}&\textbf{39}&\textbf{42}&\textbf{45}&\textbf{48}&\textbf{50}&\textbf{53}&\textbf{56}&\textbf{58}&\textbf{61}&\textbf{64}&\textbf{66}&\textbf{69}&\textbf{72}&\textbf{74}\\
				7&&&&&\textbf{33}&\textbf{37}&\textbf{40}&\textbf{44}&\textbf{47}&\textbf{50}&\textbf{53}&\textbf{56}&\textbf{60}&\textbf{63}&\textbf{66}&\textbf{69}&\textbf{72}&\textbf{75}&\textbf{78}&\textbf{81}&\textbf{84}\\
				8&&&&&&\textbf{42}&\textbf{45}&\textbf{50}&\textbf{53}&\textbf{57}&\textbf{60}&\textbf{64}&\textbf{67}&\textbf{70}&\textbf{74}&\textbf{77}&\textbf{81}&\textbf{84}&\textbf{87}&\textbf{90}&\textbf{94}\\
				9&&&&&&&\textbf{49}&\textbf{54}&\textbf{59}&\textbf{64}&\textbf{67}&\textbf{70}&\textbf{73}&\textbf{77}&\textbf{81}&\textbf{85}&\textbf{89}&\textbf{93}&\textbf{96}&\textbf{100}&104\\
				10&&&&&&&&\textbf{60}&\textbf{64}&\textbf{68}&\textbf{73}&\textbf{77}&\textbf{81}&\textbf{85}&\textbf{90}&\textbf{94}&\textbf{98}&\textbf{102}&108&112&116\\
				11&&&&&&&&&\textbf{69}&\textbf{74}&\textbf{80}&\textbf{84}&\textbf{88}&\textbf{92}&\textbf{96}&\textbf{101}&109&113&117&121&125\\
				12&&&&&&&&&&\textbf{80}&\textbf{86}&\textbf{91}&\textbf{96}&\textbf{99}&108&113&118&122&127&132&136\\
				13&&&&&&&&&&&\textbf{92}&\textbf{98}&\textbf{104}&\textbf{107}&117&122&126&131&136&140&145\\
				14&&&&&&&&&&&&\textbf{105}&\textbf{112}&\textbf{115}&125&130&136&141&146&151&155\\
				15&&&&&&&&&&&&&\textbf{120}&\textbf{123}&134&139&144&150&155&160&166\\
				16&&&&&&&&&&&&&&\textbf{128}&142&148&154&160&165&170&176\\
			};
			\draw (A-1-1.north east) -- (A-15-1.south east);
			\draw (A-5-5.south west) -- (A-5-5.north west) -- (A-5-9.north east) -- (A-5-9.south east) -- (A-5-21.south east) -- (A-6-21.south east) -- (A-6-22.south east);
			\draw[dashed] (A-9-9.south east) -- (A-8-9.south east) -- (A-8-11.south east) -- (A-7-11.south east) -- (A-7-13.south east) -- (A-6-13.south east) -- (A-6-21.south east);
		\end{tikzpicture}
		\caption{$z_3(m,n)$}
		\label{table:3x3}
	\end{table}
	
	\begin{table}[h]
		\centering
		\begin{tikzpicture}
			\matrix(A)[matrix of nodes,every cell/.style={anchor=base east}] {
				$m\setminus n$&4&5&6&7&8&9&10&11&12&13&14&15&16&17&18&19&20&21\\
				\hline
				4&\textbf{15}&\textbf{18}&\textbf{21}&\textbf{24}&\textbf{27}&\textbf{30}&\textbf{33}&\textbf{36}&\textbf{39}&\textbf{42}&\textbf{45}&\textbf{48}&\textbf{51}&\textbf{54}&\textbf{57}&\textbf{60}&\textbf{63}&\textbf{66}\\
				5&&\textbf{22}&\textbf{26}&\textbf{30}&\textbf{33}&\textbf{37}&\textbf{41}&\textbf{45}&\textbf{48}&\textbf{52}&\textbf{56}&\textbf{60}&\textbf{63}&\textbf{66}&\textbf{69}&\textbf{72}&\textbf{75}&\textbf{78}\\
				6&&&\textbf{31}&\textbf{36}&\textbf{39}&\textbf{43}&\textbf{47}&\textbf{51}&\textbf{55}&\textbf{59}&\textbf{63}&\textbf{67}&\textbf{71}&\textbf{75}&\textbf{78}&\textbf{82}&\textbf{86}&\textbf{90}\\
				7&&&&\textbf{42}&\textbf{45}&\textbf{49}&\textbf{54}&\textbf{58}&\textbf{63}&\textbf{68}&\textbf{72}&\textbf{77}&\textbf{82}&\textbf{87}&\textbf{90}&\textbf{95}&\textbf{100}&\textbf{105}\\
				8&&&&&\textbf{51}&\textbf{55}&\textbf{60}&\textbf{65}&\textbf{70}&\textbf{75}&\textbf{80}&\textbf{85}&\textbf{90}&\textbf{95}&\textbf{99}&106&111&115\\
				9&&&&&&\textbf{61}&\textbf{67}&\textbf{72}&\textbf{78}&\textbf{84}&\textbf{88}&\textbf{94}&\textbf{99}&108&113&118&123&129\\
				10&&&&&&&\textbf{74}&\textbf{79}&\textbf{86}&\textbf{93}&\textbf{97}&108&114&120&126&131&136&141\\
				11&&&&&&&&\textbf{86}&\textbf{93}&\textbf{100}&112&118&124&130&136&142&148&154\\
				12&&&&&&&&&\textbf{100}&\textbf{108}&121&127&134&141&148&154&161&168\\
				13&&&&&&&&&&\textbf{117}&130&138&145&153&159&166&173&180\\
			};
			\draw (A-1-1.north east) -- (A-11-1.south east);
			\draw (A-4-4.south west) -- (A-4-4.north west) -- (A-4-11.north east) -- (A-4-11.south east) -- (A-4-18.south east) -- (A-5-18.south east) -- (A-5-19.south east);
			\draw[dashed] (A-7-7.south west) -- (A-7-7.north west) -- (A-7-8.north east) -- (A-5-8.south east) -- (A-5-18.south east);
		\end{tikzpicture}
		\caption{$z_4(m,n)$}
		\label{table:4x4}
	\end{table}
	
	\section{Maximal square matrices}
	
	Each row in the table in this section contains a value for $a$, a value for $m$ and all maximal matrices for $z_a(m)$ up to isomorphism (which includes transposing the matrix) together with their row and column sums and automorphism groups. The matrices are presented both as images and as coded strings that can be decoded through the \texttt{decode\_array()} function in Kyoto \cite{kyoto}.
	
	A $(0,1)$-matrix is encoded by flattening it so that rows remain contiguous, padding the result on the right to a multiple of 8 bits with zeros, interpreting each byte in \textit{little-endian} order and encoding the final byte sequence using Base64. The height and width are prepended, separated by spaces.
	
	Where possible a symmetric presentation of each matrix has been chosen; those matrices without such representations have their encodings marked with an asterisk.
	
	\newcommand{\wrap}[1]{\parbox[b]{.4\linewidth}{\vspace{1mm}#1}}

	
	\subsection{Discussion}
	
	Nearly all of the found maximal matrices can be arranged to be symmetric, with all exceptions at least having some other automorphisms. This suggests that exhaustively checking matrices for all-one minors as Collins \cite{collins} did is highly unlikely to yield maximal matrices, even if the tools used can support an exhaustive search at the desired matrix size. Instead, to obtain explicit lower bounds for the Zarankiewicz function, one should try extending smaller matrices to larger ones by adding as many ones as possible.
	
	The first non-trivial maximal matrices for a given $a$ follow a simple pattern: for $a\le m<2a$ the complement of the bipartite graph equivalent to the maximal matrix is simply $2(m-a)+1$ isolated edges, and when $m=2a$ the complement is $a-1$ isolated edges and a $2(a+1)$-cycle. Yang \cite{yang} has proved that these are indeed the unique maximal matrices up to isomorphism for these sets of parameters, but the situation immediately becomes very complicated after that point: there are 2 maximal matrices for $z_2(5)$, only 1 for $z_3(7)$, but 9 for $z_4(9)$.
	
	Some maximal matrices in the above table have been arranged to highlight a circulant (sub)matrix motif. This is most apparent for the $z_2(m)$ cases solved by Reiman's projective plane construction \cite{reiman}, since the resulting matrix can always be made circulant by a result of Singer \cite{singer}, but circulant matrices also appear elsewhere. For example, 23 is the smallest number $n$ above 21 for which five elements of $\mathbb Z/n\mathbb Z$ can be chosen so that their pairwise differences are all distinct -- $(0,1,3,8,14)$ is an example -- and taking cyclic shifts of any such set yields, as it turns out, the unique maximal matrix for $z_2(23)$. (Even after enforcing the constraints in section \ref{sec:method} the CNF instance still has exactly $6^6$ solutions, as counted by sharpSAT \cite{sharpSAT}; the matrix was verified to be unique by repeatedly shuffling and sorting its rows and columns, thereby reaching all $6^6$ solutions.)
	
	There is an asterisk in the row for $z_3(16)$ because even though its value has been shown to be exactly 128, the complete list of maximal matrices there has not yet been proven. The matrix shown is the only \textit{known} maximal one up to isomorphism.
	
	\section{Conclusion}
	
	The CNF instances we generated for each set of parameters did not split the problem into cases finer than specific combinations of row and column partitions. Combined with the use of just one processor at a time, this imposed a limit on how far our new results could reach with reasonable computational effort at around $z=100$. Parallelisation and further splitting outside the SAT solver (e.g.\ enumerating all possible ways to assign the first two rows and columns, each way leading to its own sub-case) as analysed in Heule \cite{schur5} would help, but those techniques would in turn allow more optimisations which we did not consider either -- argument D could exclude certain partial assignments without any further solving required, for example.
	
	Despite the limitations, our results represent a significant contribution towards Zarankiewicz's problem, both in the range of new values and in revealing the structure of maximal matrices -- previous work was mostly limited to the values, which by themselves follow no discernible pattern in general other than their strict monotonicity.
	
	\begingroup
	\sloppy
	\printbibliography
	\endgroup
\end{document}